\newtheorem{theorem}{Theorem}
\newenvironment{proof}{\noindent{\bf Proof:}}{$\hfill \Box$ \vspace{10pt}}
\begin{document}
\title{On the Differential Equation $\frac{d}{dt}\left(\frac{\cos x}{1-\dot{x}}\right)\,=\,-\sin x$ }
\author{ P.S.Datti and B.R.Nagaraj\\
 T.I.F.R. Centre for Applicable Mathematics\\ Bangalore, India\\
\texttt{Email: psd@math.tifrbng.res.in and brn@math.tifrbng.res.in}}
\maketitle
\begin{abstract}
The autonomous differential equation in the title is derived in S.Srinivasan~\cite{ss05} (equation (E) in ~\cite{ss05}) in the context of certain discrete sums from the number theoretic considerations. These discrete sums are then estimated in terms of an integral involving the solutions of this differential equation; no analysis is done on this integral in ~\cite{ss05}. Our main objective is to consider this equation from the dynamical systems view point and describe all the orbits in the phase plane. The equation is singular in the sense that the coefficient of $\ddot{x}$ vanishes or becomes unbounded at a few points. This singularity puts a restriction on the initial data. The solutions that we obtain in explicit form are, however, smooth and satisfy the equation pointwise or in the limiting sense. The explicit form of solutions may also be used to analyse the integral in ~\cite{ss05}. The equation possesses an infinite number of equilibrium or critical points and there are periodic orbits surrounding each of them in a part of the phase plane. In the complement of this part of the phase plane, there are non-periodic orbits, any two of which have an infinite number of common points. This, however, is in great contrast with the fact that any two (distinct) orbits of a regular autonomous system are disjoint, thus bringing out the singular nature of the equation.\\

\noindent Key Words: Singular autonomous equation, equilibrium points, phase plane analysis, Hartman-Grobman theorem.\\
Mathematics Subject Classification:34C25,34A05
\end{abstract}
\section{Introduction and Statement of the Main Result} The differential equation

\begin{equation} \label{1}
\frac{d}{dt} \left(\frac{\cos x}{1-\dot{x}}\right)\,=\,-\sin x 
\end{equation}
 
\noindent where $\cdot$ denotes the differentiation w.r.t. $t$ and $x$ is the unknown (real) function, was introduced by S.Srinivasan~\cite{ss05} in a number-theoretic context involving certain discrete sums.(He uses the notation $s$ instead of $x$). Our main aim is to study this diffrential equation from the dynamical systems point of view and describe all its orbits. It turns out that this equation has close resemblance with the unforced,undamped pendulum equation $\ddot{x}\,+\,k \sin x\,=\,0\,(k>0)$ (see, e.g., Simmons and Krantz~\cite{sk08}.) Unlike the pendulum equation, the equation (\ref{1}) exhibits solutions in explicit form for all possible initial data. Specifically, we show that the orbits of this equation are periodic iff $\dot{x} (0)\,<\,1/2.$ The apparent singular nature of the equation in the form of the coefficient of the highest order derivative term, namely, $\ddot{x}$, is resolved in an interesting way and we do obtain smooth solutions in explicit form.\\

\noindent Rewrite the equation (\ref{1}) as

\begin{equation} \label{2}
 \frac{\cos x}{(1\,-\, \dot{x})^2 } \ddot{x}\,-\,\frac{\sin x}{1\,-\, \dot{x}} \dot{x}\,=\, -\sin x. 
\end{equation}
We consider the initial value problem by imposing the appropriate initial conditions
\[ x(0)\,=\,a,\,\,\dot{x} (0)\,=\,b ~~\mbox{with}~~ a,\,b\, \in \mathbb{R}. \]
The equation (\ref{2}) is an autonomous equation, but not in the conservative form. 
Our main aim in this paper is to describe the orbits of solutions of (\ref{2}) in the $(x,\dot{x})$ plane, {\it the phase plane}, for all the possible initial data.\\

Looking at the coefficient of $\ddot{x}$ in (\ref{2}), we observe that it is essential to take $a \neq (2n+1)\pi /2, n \in \mathbb{Z}$ and $b \neq 1.$ We will now make several observations regarding the solutions of (\ref{2}) and make the necessary conditions on the initial data $a,b.$
 
\begin{enumerate}
\item The equlibrium points of the equation are $(n \pi\,,\,0)$ with $n$ an integer.
\item If $x$ is a solution, so is $x\,+\,n \pi.$
\item The line  $\dot{x} =1/2,$ in the phase plane is an orbit  of the equation, i.e., $x(t)\,=\,a\,+\,t/2$ is a solution of the equation for any $a \in \mathbb{R}.$
\end{enumerate}

Therefore, the orbits with $b\,<\, 1/2(~\mbox{respectively}~b\,>\, 1/2)$ remain in the region $\dot{x}\,<\, 1/2 (~\mbox{respectively}~\dot{x}\,>\, 1/2)$ in the $(x,\dot{x})$ phase palne. See, also, (\ref{E}) below.\\

With these observations, we assume the following conditions on the initial data:

\begin{eqnarray} \label{I}
|a|\,<\,\pi /2 ~\mbox{and}~ b \neq   \frac{1}{2},1 ~~\mbox{with}~~ |a|\,+\,|b|\,>\,0.
\end{eqnarray}

We now state the main result.\\

\begin{theorem} The following hold:

\begin{enumerate}
\item The equation (\ref{1}) has periodic solutions only when $b\,<\,\frac{1}{2}.$ The orbits in this case are periodic orbits surrounding the origin in the phase plane and are symmetric w.r.t. $\dot{x}$ axis.

\item If $b\,>\,\frac{1}{2},$ then there is an increasing real sequence $\{t_j\},j \in \mathbb{Z},$ whose elements are the roots of a trigonometric equation, with $t_0$ the smallest positive root, such that the solution $x$ satisfies $x(t_j) =$  an odd integer multiple of $\pi /2$ and $\dot{x}(t_j)=1$ for all $j \in \mathbb{Z}.$

\noindent The equation and the energy equation((\ref{E}), derived below) are satisfied at these points in the sense of limit $t \to t_j.$\\

\noindent The orbits in this case oscillate in the phase plane around the 'interface' line $\dot{x}=1$ line, in the strip between the lines $\dot{x}\,=\,\frac{\sqrt{1+c}}{\sqrt{1+c} + 1}$ and $\dot{x}\,=\,\frac{\sqrt{1+c}}{\sqrt{1+c} - 1}$.(Incidentally, $1$ is the harmonic mean of $\frac{\sqrt{1+c}}{\sqrt{1+c} + 1}$ and $\frac{\sqrt{1+c}}{\sqrt{1+c} - 1}.)$ The crossing of the interface line occurs exactly at points corresponding to $t\,=\,t_j.$  
\end{enumerate}

In either case, the solution has an explicit form.
\end{theorem}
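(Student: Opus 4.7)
The plan is to exhibit an explicit first integral of (\ref{2}) and use it to parametrize the orbits in closed form. Regarding $v = \dot{x}$ as a function of $x$ along an orbit so that $\ddot{x} = v\, dv/dx$, equation (\ref{2}) rearranges to $\cos x \cdot v\, dv/dx = \sin x\,(2v-1)(1-v)$, which separates as
\[
\frac{v\, dv}{(2v-1)(1-v)} = \tan x\, dx.
\]
Partial fractions give $v/[(2v-1)(1-v)] = 1/(2v-1) + 1/(1-v)$, so both sides integrate elementarily, and exponentiating yields the \emph{energy equation}
\[
(\mathrm{E}):\quad \frac{(2\dot{x}-1)\cos^2 x}{(1-\dot{x})^2} = c, \qquad c = \frac{(2b-1)\cos^2 a}{(1-b)^2}.
\]
Note that the sign of $c$ agrees with the sign of $2b-1$, which is precisely the case split of the theorem.

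Viewing (E) as a quadratic in $\dot{x}$, namely $c\dot{x}^2 - 2(c+\cos^2 x)\dot{x} + (c+\cos^2 x) = 0$, solving it gives
\[
\dot{x} = \frac{\sqrt{c+\cos^2 x}\,\bigl(\sqrt{c+\cos^2 x} \pm |\cos x|\bigr)}{c}.
\]
Substituting into $dt = dx/\dot{x}$ and rationalizing the denominator, the expression collapses to $dt = \bigl(1 \mp |\cos x|/\sqrt{c+\cos^2 x}\bigr)\, dx$, whose remaining integral is evaluated via $u = \sin x$ as an arcsine. This produces, on each monotone branch, an explicit formula for $t$ as a function of $x$, and hence by inversion an explicit $x(t)$, which is the closed form promised in the last sentence of the theorem.

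For part~(1), $b<1/2$ gives $c<0$; write $-c = \cos^2 a_0$ with $a_0 \in (0,\pi/2)$. Then (E) forces $\cos^2 x \geq \cos^2 a_0$, confining the orbit to $|x|\leq a_0$. The two sign choices $\pm$ trace the upper and lower arcs of a curve that meet at $(\pm a_0, 0)$, forming a simple closed curve around the origin; evenness of the energy in $x$ gives symmetry about the $\dot{x}$-axis, and closedness of an orbit of an autonomous system yields periodicity. For part~(2), $b>1/2$ gives $c>0$ and $c+\cos^2 x>0$ throughout, so the orbit extends for all $x\in\mathbb{R}$. The explicit formula for $\dot{x}$ shows $\dot{x}=1$ iff $\cos x = 0$, i.e.\ at $x=(2n+1)\pi/2$, while at $|\cos x|=1$ one obtains the extreme values $\sqrt{1+c}/(\sqrt{1+c}\mp 1)$; the harmonic-mean identity reduces to $[(\sqrt{1+c}+1)+(\sqrt{1+c}-1)]/\sqrt{1+c}=2$. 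Setting $x=(2n+1)\pi/2$ in the explicit $t(x)$ formula yields the trigonometric equation whose roots form the sequence $\{t_j\}$, and strict monotonicity of $t(x)$ on each branch gives the increasing indexing by $\mathbb{Z}$ with smallest positive element $t_0$.

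The main technical obstacle is the singularity of (\ref{2}) along the line $\dot{x}=1$, which in part~(2) the orbit actually touches at each $t_j$: the coefficient of $\ddot{x}$ in (\ref{2}) vanishes there, so the equation and (E) are not literally meaningful at $t_j$. One must verify that the piecewise solution obtained by alternating the $\pm$ branches matches smoothly across the points $(x(t_j),1)$, and that (\ref{2}) and (E) both hold in the limit $t\to t_j$. This reduces to a Taylor expansion of the closed-form $x(t)$ near $t_j$ in which the apparent pole in $1/(1-\dot{x})$ cancels against the first-order vanishing of $\cos x$, producing a finite limit and legitimizing the orbits so constructed as genuine solutions of the original singular equation.
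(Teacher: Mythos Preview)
Your argument is correct and reaches the same conclusions, but the route differs from the paper's in two places worth noting. After deriving the identical energy relation (E), the paper substitutes $X=\tan x$ and integrates to obtain $\tan x$ as an explicit rational function of $\sin t$ and $\cos t$ (with coefficients $A=\sin 2a$, $B=2b-1+\cos 2a$); from this it extracts a closed formula for $\dot x(t)$ that is visibly $2\pi$-periodic and then decides periodicity of $x$ by computing the mean $\langle\dot x\rangle$ (which turns out to be $0$ for $b<1/2$ and $1$ for $b>1/2$) via a small lemma on antiderivatives of periodic functions. You instead solve (E) for $\dot x$ directly and integrate $dt=(1\mp|\cos x|/\sqrt{c+\cos^2 x})\,dx$ to the arcsine primitive $t=x\mp\arcsin(\sin x/\sqrt{1+c})+\text{const}$, then argue periodicity for $b<1/2$ from the closed-curve picture in the (regular) region $|x|\le a_0$, $\dot x<1/2$. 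Your integral is more elementary and gives the $t_j$ immediately by evaluation at $x=(2n+1)\pi/2$; the paper's parametrization instead yields the $t_j$ as the roots of $B\cos t-A\sin t=2(b-1)$, and its explicit $\dot x(t)$ makes the oscillation bounds and the limiting verification at $t_j$ essentially read off. Either package suffices for the theorem; just be aware that your $|\cos x|$ forces sign bookkeeping across $x=(2n+1)\pi/2$, which is exactly the branch-matching you flag in your last paragraph.
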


\begin{proof}
The proof is a consequence of various relations derived in what follows and many computations done below.
\end{proof}\\

\noindent {\bf Remark} The case $b<1/2$ is similar to the case of pendulum equation for small total energy and that of $b>1/2,\,b \neq 1$ for large total energy. See, Simmons and Krantz \cite{sk08}.\\

Linearizing around an equilibrium point $(n \pi\,,\,0), n \in \mathbb{Z,}$ by writing $x\,=\,y\,+\,n \pi$ with $y$ small, the linearized equation is
\[ \ddot{y} \,=\,-y \]
and hence the equlibrium point $n \pi$ is linearly stable but not asymptotically stable.

\section{First integration. The "energy" equation} Assuming $\cos x \neq 0,$ we may rewrite the equation (\ref{2}) as
\begin{equation}
\ddot{x}\,=\,(\tan x)(1\,-\,\dot{x})(2 \dot{x}\,-\,1). 
\end{equation}
Multiplying this by $\dot{x}$ and integrating once yields 

\begin{equation} \label{E}
(\cos^{2} x) \frac{2 \dot{x}\,-1}{(1\,- \dot{x})^2}\,=\,c,
\end{equation}

\noindent where $c$ is a constant. In analogy with a conservative equation, we call this the "energy"  equation. This energy equation will be used to describe the orbits of the equation and also find the solutions in explicit form. In the process, we also learn how to handle the apparent singularity arising in the equation in the coefficient of $\ddot{x}.$\\

First note that from the initial data, we have $c\,=\,\frac{2b\,-\,1}{(1\,-\,b)^2}\cos^{2} a.$
It immediately follows that, using (\ref{I}), $c \in (-1,0)$ if $b\,<\,1/2$ and $c>0$ for $b\,>\,1/2$ and the sign of $2\dot{x}\,-\,1$ is the same as the sign of $2b\,-\,1$ thus proving the remark made above regarding the orbits of solutions.\\

Taking the $\cos^{2} x$ term the other side and adding and substracting $\dot{x} ^2$ to the numerator term on the left side of (\ref{E}), we obtain
\begin{equation} \label{e}
\frac{\dot{x}^2}{(1\,-\,\dot{x})^2}\,=\,1\,+\,c \sec^{2} x. 
\end{equation}
Using (\ref{e}), we now describe the phase portrait of orbits in the phase plane.\\

Suppose $b\,<\,1/2.$ Then $c\, \in \, (-1,0)$ and therefore the $x-$ values are restricted to a symmetrtic interval $[-\alpha,\,\alpha]$ around $0$ with $\alpha\,<\,\pi /2$ satisfying $\cos (\alpha)\,=\,\sqrt{-c}.$ Consequently, the values of $\dot{x}$ are also restricted to a finite interval $[-\frac{\sqrt{1+c}}{1-\sqrt{1+c}}, \frac{\sqrt{1+c}}{1+\sqrt{1+c}}],$ determined by the energy equation. Using (\ref{e}) we obtain that
\begin{equation} \label{e1}
\dot{x}\,=\,\frac{\pm \sqrt{1+ c \sec^2 x}}{1 \pm \sqrt{1+ c \sec^2 x}},
\end{equation}
with $'+'$ sign if $\dot{x}>0$ and $'-'$ sign if $\dot{x}<0.$\\
This enables us to draw a phase portrait in this case. Thus, we obtain a periodic orbit around the equilibrium point $(0,0),$ traversing clockwise indicating the direction of increasing $t.$ 

Next, suppose $b>1/2$ and $b \neq 1.$ In this case we have $c>0$ and therefore, there is no restriction on $x$ values and $x$ is an increasing function of $t$. Hence the solutions are unbounded in this case. In this case we have,
\begin{equation} \label{e2}
\dot{x}\,=\,\frac{\sqrt{1+ c \sec^2 x}}{\sqrt{1+ c \sec^2 x} \pm 1},
\end{equation}
with $'+'$ sign if $\dot{x} < 1$ and $'-'$ sign if $\dot{x} > 1.$\\

Using (\ref{e2}), we can now complete the phase portarit in this case and we find that the orbit oscillates between the lines $\dot{x}\,=\,\frac{\sqrt{1+c}}{1+\sqrt{1+c}}$ and $\dot{x}\,=\,\frac{\sqrt{1+c}}{\sqrt{1+c}-1}$ in the phase plane, crossing the 'interface' line $\dot{x}=1$ at the points where $x$ is an odd integer multiple of $\pi /2.$\\

Because of the apparent singularity in the equation at those $t$ where $x(t)$ is an odd integer multiple of $\pi /2$ and $\dot{x}(t)=1,$ we will explain the sense in which the equation is satisfied at such $t$ once we obtain the solution in explicit form and see that this singularity does not cause any difficulty.

\section{Final integration. Explicit formula for the solution}. We now perform an integration of the enrgy relation to obtain the solution in explicit form. Looking at the energy relation, it is convenient to introduce the variable $X\,=\, \tan x.$ We then obtain the following equation  taking the positive sign in (\ref{e1}) or (\ref{e2}) (the other cases being similar),

\begin{equation} \label{3}
 \dot{X}\,=\,(1\,+\,X^2) \frac{\sqrt{1+c+c X^2}}{1+ \sqrt{1+c+c X^2}} 
\end{equation}

\noindent Integrating this equation and making use of addition formula for $\arctan$ function we obtain, after some algebra,

\begin{equation} \label{X}
X\,=\, \frac{A \cos t\,+\,B \sin t}{2(1-b)\,+\,B \cos t\,-\, A \sin t}. 
\end{equation} 

\noindent Here $A\,=\,\sin 2a$ and $B\,=\,2b\,-\,1\,+\,\cos 2a.$ We note the following relation connecting $A,B,a,b$ and $c$, which is used in computations that follow. We have,

\begin{equation} \label{D}
A^2\,+\,B^2\,-\,4(1-b)^2\,=\,4(2b-1) \cos^2 a.
\end{equation}

We may rewrite (\ref{D}) as

\begin{equation} \label{D1}
A^2 + B^2\,=\,4(1-b)^2 (1+c).
\end{equation}

Though (\ref{X}) gives the solution in the explicit form, it will be difficult to see whether the solution is periodic or not because of the presence of $\arctan .$ We now obtain the following explicit formula for $\dot{x}$, using (\ref{X}), which will be used to decide the periodicity of the solution easily and also in further analysis. We have:

\begin{eqnarray} \label{xd}
\dot{x} (t) &=& \frac{A^2\,+\,B^2\,+\,2(1-b)(B \cos t\,-\, A \sin t)}{A^2\,+\,B^2\,+\,4(1-b)(B \cos t\,-\, A \sin t)\,+\,4(1-b)^2} \notag \\
&=& \frac{1}{2}\,+\,\frac{2(2b-1) \cos^2 a}{A^2\,+\,B^2\,+\,4(1-b)(B \cos t\,-\, A \sin t)\,+\,4(1-b)^2}
\end{eqnarray}

\noindent Note that the denominator in (\ref{xd}) is positive being the sum of two square terms $(A \cos t + B \sin t)^2 + (2(1-b) + (B \cos t - A \sin t))^2.$ These two terms cannot simultaneouly vanish because of our assumption (\ref{I}). This also implies that the terms in the numerator and denominator of the expression for $X$ in (\ref{X}) do not vanish simultaneously. From (\ref{xd}), we again observe that the sign of $\dot{x} - 1/2$ is solely determined by the sign of $b - 1/2.$ \\

It is not hard to verify the energy relation using the above expressions for $X$ and $\dot{x}.$\\

We now show how to use the above expression for $\dot{x}$ to decide when the solution $x$ will be periodic, with the aid of the following simple lemma, which is straight forward to prove.\\

{\bf Lemma:} Suppose $f: \mathbb{R} \to \mathbb{R}$ is a continuous periodic function of period $T$ and define $g: \mathbb{R} \to \mathbb{R}$ by $g(t)\,=\,\int_{0}^{t}\,f(s)\,ds\,-\,<f> t,$ where $<f>\,=\,\frac{1}{T} \int_{0}^{T}\,f(t)\,dt$ is the average of $f$ over a period interval. Then $g$ is also a periodic function, of period $T.$\\

Using (\ref{xd}) we see that $\dot{x}$ is a periodic function of period $2\pi$ and $<\dot{x}>$ is $0$ or $1$ according as $b\,<\,1/2$ or $b\,>\,1/2$ respectively. Hence by the Lemma, it follows that $x$ is periodic iff $b\,<\,1/2$ and the function $x(t)\,-\,t$ is periodic when $b\,>\,1/2.$ \\

\section{Analysis of (\ref{X}) and (\ref{D})}
We now analyse whether a solution $x$ takes the value $\pi /2$ or an odd integer multiple of it, in finite time and how to handle that situation.\\

Referring to (\ref{X}), we see that this is possible only if the denominator in (\ref{X}) vanishes. This vanishing condition will be determined by (\ref{D}) or (\ref{D1}).\\

\underline{Case 1: $b<1/2.$}\\

We have $c \in (-1,0)$ and  (\ref{D1}) then implies that $A^2 + B^2 < 4(1-b)^2,$ so that $\frac{2(1-b)}{\sqrt{A^2 + B^2}}>1.$ Therefore, the denominator in (\ref{X}) never vanishes and, since it is positive at $t=0,$ it is positive for all $t.$ This observation reconfirms the earlier statement made on the boundedness of the solutions.\\

\underline{Case 2: $b>1/2,b \neq 1.$}\\

Now, since $c>0,$ (\ref{D}) implies $\frac{2|1-b|}{\sqrt{A^2 + B^2}}<1.$ Therefore the denominator in (\ref{X}) now vanishes at finite  $t_j, \, j \in \mathbb{Z},$ which are the roots of the trigonometric equation
\[ B \cos t\,-\,A \sin t \,=\,2(b-1).\]
We may fix $t_0$ to be the smallest positive solution.
Thus, $x(t_j)$ is an odd integer multiple of $\pi /2$ and from the expression for $\dot{x},$ we see that $\dot{x}(t_j )\,=\,1.$ These values precisely correspond to the singularities in the equation (\ref{1}) or (\ref{2}).

We now write down an explicit expression for the solution by an integration of the expression (\ref{xd}) for $\dot{x}.$\\

Note that the integral of the secod term in the expression for $\dot{x}$ involves $\arctan$ function containg $\tan$ function in arguments. Therefore, a little care is required to write this expression. Let $\psi (t)$ denote the (indefinite) integral of this second term. We have
\begin{equation}
\psi (t)=
\begin{cases}
\arctan \left(\frac{[A^2 + (B-2(1-b))^2] \tan (t/2) - 4(1-b)A}{4 |2b-1| \cos^2 a} \right),   &\text{if $|t| < \pi$ } \\
\pm \pi /2    &\text{if $ t= \pm \pi$ }
\end{cases}
\end{equation}

We then extend the definition for a $t$ in an interval $[(2n-1) \pi, (2n+1) \pi ],\,n \in \mathbb{Z} $ by
\[\psi (t)\,=\,n \pi + \psi (t-2n \pi).\]
The solution is then expressed as

\begin{equation}
x(t)\,=\,a + \frac{t}{2} \pm (\psi (t) - \psi (0)) 
\end{equation}

\noindent for any real $t$, with positive sign if $b>1/2$ and negative sign if $b<1/2.$ This immediately gives periodicity of the solution when $b<1/2.$\\

In \cite{ss05}, the initial values $a,b$ are taken as
\[ a\,=\,0, \quad b\,=\,\frac{\xi}{\xi + 1},\,\,\xi \neq 0, \pm 1. \]

\noindent These values make the expressions above much simpler and we note them below.\\
We have $A=0$ and $B=2b= \frac{2 \xi}{\xi +1},$ and $\psi$ is given by $\psi (t)\,=\,\arctan \left(\frac{|\xi -1|}{|\xi +1|} \tan (t/2)\right) ~\mbox{for}~ |t|< \pi ~\mbox{and}~ \psi(\pm \pi)\,=\,\pm \pi /2.$ $\psi$ is then extended for all real $t$ as before. The expressions for $ \dot{x}, ~\mbox{and}~ x$ are given by
\begin{align}
\dot{x}(t)\,&=\,\frac{1}{2}\left(1 + \frac{\xi^2 - 1}{ \xi^2 + 2 \xi \cos t + 1}\right) \notag \\
x(t)\,&=\, t/2 \pm \psi (t),\,t \in \mathbb{R} \notag
\end{align}

\noindent In the  expression for $x$, $'+'$ is chosen when $|\xi| >1$ and $'-'$ is chosen when $|\xi| < 1.$ These expressions may be used to obtain more information about the integral mentioned in \cite{ss05}.\\

We remark that the phase portraits may also be easily drawn using the above expressions for $x$ and $\dot{x},$ which give a parametric representation of the orbit. A few orbits are depicted in the following two figures with $a=0, b=1/4 ~\mbox{and}~ -2/5$ (Figure $1$) and $a=0, b=3/4 ~\mbox{and}~3/2$ (Figure $2$) respectively. In these figures $y=\dot{x}.$\\
\begin{figure}[!h]
\center
\includegraphics[width=5in,height=4in,angle=0]{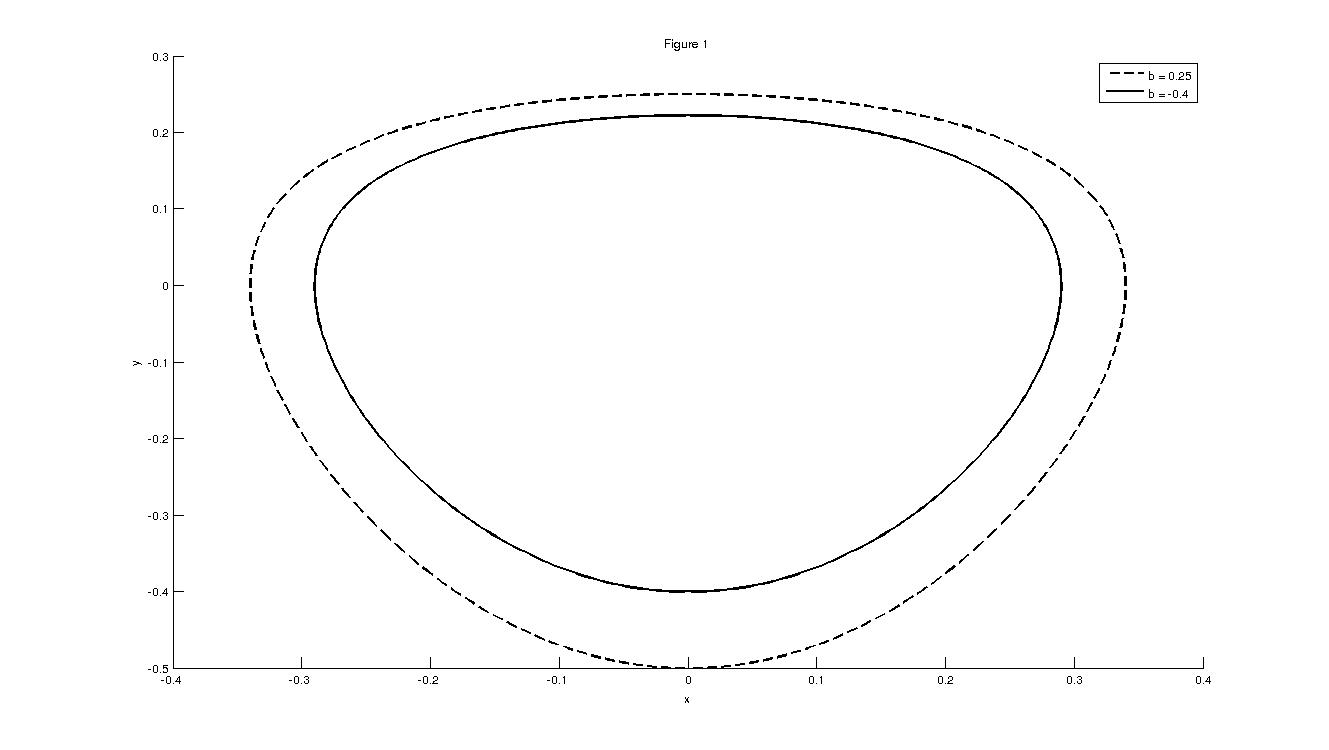}
\label{Fig1}
\end{figure}
\begin{figure}[!h]
\center
\includegraphics[width=5in,height=4in,angle=0]{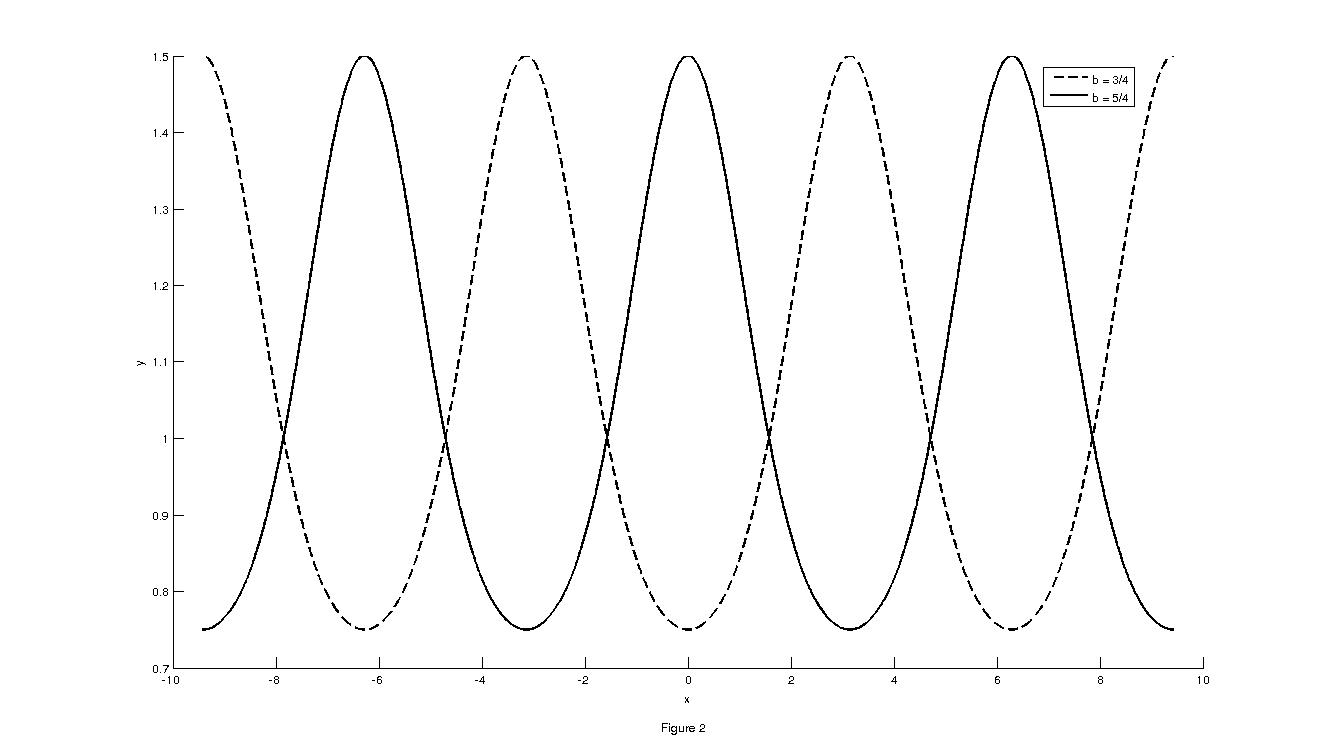}
\label{Fig2}
\end{figure}

\noindent {\bf Remark:} For the differential equation
\[ \frac{d}{dt}\left(\frac{\cos x}{1-\dot{x}}\right)\,=\,\sin x, \]
the energy equation is given by
\[ (1-\dot{x}) \exp (\dot{x})\,=\,c \cos x, \]
with $c$ a constant expressed in terms of the initial data. Using this energy equation we can do a similar phase plane analysis as above and obtain a complete phase portrait. Since an expression for $\dot{x}$ cannot be obtained, no further integration of the energy equation is possible and as such we will not be able to obtain the solutions in explicit form. The equilibrium points are again $(n \pi,0)$ with $n$ an integer. A simple linear analysis shows that each equilibrium point is linearly unstable and is of saddle type. Thus, by Hartman-Grobman theorem, the same behaviour locally persists for the nonlinear equation also. One can similarly prove the smoothness of the solutions; in fact, real analyticity of the solutions can be established using Gevrey-1 estimates.

\end{document}